\sloppy\pagestyle{plain}
\newtheorem{theorem}[equation]{Theorem}
\newtheorem{lemma}[equation]{Lemma}
\newtheorem{corollary}[equation]{Corollary}
\theoremstyle{definition}
\newtheorem{definition}[equation]{Definition}
\newtheorem{remark}[equation]{Remark}
\theoremstyle{remark}
\makeatletter\@addtoreset{equation}{section} \makeatother
\newfont{\FieldFont}{msbm10 scaled\magstep1}
\begin{document}

\Large
\begin{center}
\textbf{An Omori-Yau maximum principle for semi-elliptic operators and
Liouville-type theorems}
\end{center}
\vspace{4mm}

\normalsize
\begin{center}
\textbf{Kyusik Hong and Chanyoung Sung
}\\
\vspace{5mm} \small{\itshape Department of Mathematics, Konkuk
University, Seoul, $143$-$701$, Republic of Korea.}
\end{center}

\hrulefill \vspace{2mm}

 \small {\textbf{Abstract.} We generalize the Omori-Yau
 almost maximum principle of the Laplace-Beltrami operator on a complete Riemannian manifold $M$ to a
 second-order linear semi-elliptic operator $L$ with bounded coefficients and no zeroth order term.

Using this result, we prove some Liouville-type theorems for a
real-valued $C^{2}$ function $f$ on $M$  satisfying
$L f \geq F(f)+ H(|\nabla f|) $ for  real-valued continuous
functions $F$  and $H$ on $\Bbb R$ such that $H(0)=0$.

 \vspace{2mm}

 \emph{\textbf{Keywords}}: Omori-Yau maximum principle; Liouville-type theorem; subharmonic function\\
 \emph{\textbf{Mathematics Subject Classification 2010}} : 35B50; 35B53; 31B05}

\hrulefill \normalsize \hrulefill \normalsize

\footnotetext[1]{Date : \today. }

\footnotetext[2]{E-mail addresses : kszoo@postech.ac.kr,
cysung@kias.re.kr}

\thispagestyle{empty}

\section{Introduction}

Let $(M,g)$ be a smooth complete Riemannian manifold of dimension
$n$. A second-order linear differential operator
$L: C^\infty(M)\rightarrow C^\infty(M)$ without
zeroth order term can be written as
\begin{equation}\label{eq111}
Lf=Tr(A\circ hess(f))+g(V,\nabla f),
\end{equation}
where $A\in \Gamma(\textrm{End}(TM))$ is self-adjoint with respect
to $g$,  $hess(f)\in \Gamma(\textrm{End} (TM))$ is the Hessian of
$f$ in the form defined by $hess(f)(X)=\nabla_X\nabla f$ for $X\in
\Gamma(TM)$, and finally $V\in \Gamma(TM)$. In this article, we will
deal with the semi-elliptic case, i.e. $A$ is positive semi-definite
at each point, and we always assume that
\begin{equation}\label{222}
\sup_MTr(A)+\sup_M|V|< \infty.
\end{equation}
The purpose of this paper is to show that such a operator $L$ shares
important properties with the Laplace-Beltrami operator $\Delta$,
particularly Omori-Yau almost maximum principle and Liouville-type
theorems for subharmonic functions.

To state our main theorem, we need the following definitions.
\begin{definition}
Let $u$ be a real-valued continuous function on $M$ and let a point
$p \in M$.

\begin{itemize}
\item  a function $u$ is called proper, if the set $\{p:u(p)\leq r\}$ is compact for
every real number $r$.

\item a function $v$ defined on a neighborhood $U_{p}$
of p is called an upper-supporting function for $u$ at p, if the
conditions $v(p)=u(p)$ and $v \geq u$ hold in $U_{p}$.

\end{itemize}
\end{definition}
\begin{definition}
A proper continuous function $u: M \rightarrow \mathbb{R}$ is called
an $L$-tamed exhaustion, if the following condition
holds:
\begin{enumerate}
\item $u \geq 0$.

\item At all points $p \in M$ it has a $C^2$-smooth,
upper-supporting function $v$ at $p$ defined on an open neighborhood
$U_{p}$ such that $|\nabla v|_{p} |\leq 1 $ and $L
v|_{p} \leq 1 $.
\end{enumerate}

\end{definition}
Once there is an $L$-tamed exhaustion, it easily follows that another $L$-tamed exhaustion can be chosen so that its local upper-supporting functions $v$ satisfy  $|\nabla v| \leq 1 $ and $Lv \leq 1 $ not just at one point $p$ but also on its whole $U_p$.\footnote{For example, given an $L$-tamed exhaustion $u$, one can just take $\frac{u}{2}$. Then at each point $p$, $\frac{v}{2}$ can be used for an upper-supporting function which satisfies that $|\nabla \frac{v}{2}|_p| \leq \frac{1}{2} $ and $L(\frac{v}{2})|_p \leq \frac{1}{2}$. Thus by taking $U_p$ smaller (if necessary), one can achieve $|\nabla \frac{v}{2}| \leq 1$ and $L(\frac{v}{2}) \leq 1$ on each $U_p$.}
The existence of an $L$-tamed exhaustion function on a complete
Riemannian manifold is guaranteed if certain curvature conditions are satisfied. For instance,
\begin{theorem}(H.L. Royden \cite[Proposition 2]{Royden})
Every complete Riemannian manifold with its sectional curvature bounded below admits an $L$-tamed exhaustion function.
\end{theorem}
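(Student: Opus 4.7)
The plan is to construct an $L$-tamed exhaustion of the form $u(x)=\phi(d(x,o))$, where $o\in M$ is a fixed base point and $\phi\colon[0,\infty)\to[0,\infty)$ is a suitable smooth, non-decreasing, proper function; the analytic engine is the Hessian comparison theorem. Write $r(x)=d(x,o)$ and let $-K^{2}$ with $K>0$ be a lower bound for the sectional curvature. For $x\neq o$ with $x\notin\operatorname{Cut}(o)$, the Hessian comparison theorem yields
\begin{equation*}
hess(r)(X,X)\;\leq\; K\coth(Kr)\bigl(|X|^{2}-\langle X,\nabla r\rangle^{2}\bigr)
\end{equation*}
for every tangent vector $X$ at $x$.

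First I would estimate $Lu$ at a point where $r$ is smooth. For $u=\phi(r)$ with $\phi',\phi''\geq 0$,
\begin{equation*}
Lu \;=\; \phi''(r)\,g(A\nabla r,\nabla r)+\phi'(r)\,Tr(A\circ hess(r))+\phi'(r)\,g(V,\nabla r).
\end{equation*}
Positive semi-definiteness of $A$ gives $g(A\nabla r,\nabla r)\leq Tr(A)$, while diagonalizing $A$ in an orthonormal eigenbasis and applying the Hessian comparison eigendirection by eigendirection yields $Tr(A\circ hess(r))\leq K\coth(Kr)\cdot Tr(A)$. Setting $T=\sup_{M}Tr(A)$ and $W=\sup_{M}|V|$, both finite by hypothesis, one obtains $Lu\leq T\phi''(r)+\phi'(r)[TK\coth(Kr)+W]$. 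I would then take $\phi(r)=\varepsilon K^{-2}\log\cosh(Kr)$ for a small $\varepsilon>0$, so that $\phi'(r)=\varepsilon K^{-1}\tanh(Kr)\in[0,\varepsilon K^{-1}]$ and $\phi''(r)=\varepsilon/\cosh^{2}(Kr)\in[0,\varepsilon]$; the crucial cancellation $\phi'(r)\cdot K\coth(Kr)=\varepsilon$ for all $r>0$ kills the $\coth$ singularity at the origin, leaving $Lu\leq\varepsilon(2T+K^{-1}W)$. Choosing $\varepsilon\leq\min\bigl(K,(2T+K^{-1}W)^{-1}\bigr)$ enforces $|\nabla u|\leq 1$ and $Lu\leq 1$ wherever $r$ is smooth. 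Completeness of $M$ together with $\phi(r)\to\infty$ makes $u$ proper, and since $\log\cosh(Kr)$ is an even smooth function of $Kr$, hence smooth in $r^{2}=d(\cdot,o)^{2}$ (which is smooth near $o$), $u$ is in fact smooth at $o$ as well.

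The remaining task is to produce a $C^{2}$ upper-supporting function for $u$ at each $p\in\operatorname{Cut}(o)$. I would use the classical Calabi trick: pick a unit-speed minimizing geodesic $\gamma$ from $o$ to $p$, choose $s>0$ small, set $o'=\gamma(s)$, and define $\tilde v(x)=\phi(s+d(o',x))$. The triangle inequality gives $s+d(o',x)\geq d(o,x)$ with equality at $p$, so monotonicity of $\phi$ yields $\tilde v\geq u$ with $\tilde v(p)=u(p)$; and it is standard that for $s>0$ sufficiently small $p\notin\operatorname{Cut}(o')$, so $\tilde v$ is $C^{2}$ near $p$. Repeating the previous estimate with $r$ replaced by $s+d(o',\cdot)$ yields $|\nabla\tilde v|_{p}\leq 1$ and $L\tilde v|_{p}\leq 1$. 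The main obstacle I expect is precisely the blow-up of $K\coth(Kr)$ as $r\to 0$, which is engineered away by the choice $\phi(r)=\varepsilon K^{-2}\log\cosh(Kr)$; once this is in place, the cut locus is dispatched by the elementary triangle-inequality argument above.
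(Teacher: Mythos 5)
The paper offers no proof of this statement; it simply quotes Royden, whose argument is of exactly the type you give (a controlled function of the distance to a base point, the Hessian comparison theorem, and Calabi's supporting-function trick at the cut locus), so your proposal is essentially the intended proof and its main steps --- the estimate $Lu\leq T\phi''+\phi'(TK\coth(Kr)+W)$, the choice of $\phi$ killing the $\coth$ singularity, properness via Hopf--Rinow, and smoothness at $o$ --- are all correct. One small quantitative point deserves care: at a cut point $p$ the exact cancellation $\phi'(r)\,K\coth(Kr)=\varepsilon$ is lost, because in $L\tilde v|_{p}$ the derivative $\phi'$ is evaluated at $s+d(o',p)=d(o,p)$ while the comparison theorem produces $\coth(K\,d(o',p))$ with $d(o',p)=d(o,p)-s$; since $\coth$ is decreasing, the product $\varepsilon\tanh(Kd(o,p))\coth(K(d(o,p)-s))$ is strictly larger than $\varepsilon$ for every $s>0$, so with the borderline choice $\varepsilon=(2T+K^{-1}W)^{-1}$ the desired bound $L\tilde v|_{p}\leq 1$ can fail by a little. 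This is harmless to fix: choose $\varepsilon$ so that $\varepsilon(2T+K^{-1}W)<1$ strictly, and then for each $p$ take $s=s(p)$ small enough that the excess factor $\tanh(Kd(o,p))\coth(K(d(o,p)-s))$ is sufficiently close to $1$; since the definition of an $L$-tamed exhaustion only requires the gradient and $L$ estimates at the single point $p$, this per-point choice of $s$ is permitted, and the rest of your argument goes through unchanged.
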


When $L$ is the Laplace-Beltrami operator $\Delta$, a stronger result holds.
K.-T. Kim and H. Lee \cite{ktk07} have shown that a $\Delta$-tamed exhaustion function exists  if the Ricci
curvature $\textrm{Ric}$ satisfies
\begin{eqnarray}\label{bless}
Ric(\nabla r, \nabla r)\geq -B\rho(r)
\end{eqnarray}
for some constant $B>0$, where $r$ is the distance from an arbitrarily
fixed point in $M$ and a smooth nondecreasing function $\rho(r)$ on $[0,\infty)$ satisfies
\begin{eqnarray}\label{you}
\rho(0)=1,\ \ \ \ \int_0^\infty\frac{1}{\sqrt{\rho(t)}}\ dt=\infty,
\end{eqnarray}
\begin{eqnarray}\label{happy}
\rho^{(2k+1)}(0)=0 \ \ \forall k\geq 0,\ \ \ \ \limsup_{t\rightarrow \infty} \frac{t\rho(\sqrt{t})}{\rho(t)} < \infty.
\end{eqnarray}
For example, if
$$\textrm{Ric}(\nabla r,\nabla r) \geq -B\ r^2(\log
r)^2(\log(\log r))^2\cdots (\log^{k}r)^2$$ for $r\gg 1$, a $\Delta$-tamed exhaustion always exists.

A. Ratto, M. Rigoli, and A. Setti \cite{rrs} showed that if the above Ricci curvature condition (\ref{bless}) holds, then
for every real-valued $C^2$ function $f$ on $M$ which is bounded above,
there exists a sequence $\{p_{k}\}$ on $M$ such that
$$\lim_{k\rightarrow \infty}|\nabla f(p_{k})|=0,\
\limsup_{k\rightarrow \infty}{\Delta}f(p_{k})\leq 0, ~\textrm{and}~
\lim_{k\rightarrow \infty}f(p_{k})=\sup_{M}f.$$ This property is the
well-known Omori-Yau almost maximum principle for the Laplacian,
which was first proven by H. Omori \cite{omori} and S.T. Yau
\cite{Yau} when the Ricci curvature is only bounded below. K.-T. Kim
and H. Lee \cite{ktk07} showed the above maximum principle holds
whenever there exists a $\Delta$-tamed exhaustion. We will prove the analogous
maximum principle for the above semi-elliptic
operator $L$ also holds whenever there exists an
$L$-tamed exhaustion  by following their method in
\cite{ktk07}.

L.J. Alias, D. Impera, and M. Rigoli also proved a generalized
Omori-Yau maximum principle for $L$, when  the sectional
curvature $K$ satisfies
\begin{eqnarray}\label{AIR}
K(\Sigma)\geq -B\rho(r)
\end{eqnarray}
for any tangent 2-plane $\Sigma$ containing $\nabla r$, where $B>0$ is a constant, $r$ is the distance from an arbitrarily
fixed point in $M$, and $\rho(r)$ is as in (\ref{you}, \ref{happy}). (For a proof, see
\cite[Corollary 3]{alias} which is actually stated for $L$ with no first order terms but can be trivially extended to the general $L$.) It remains as a natural question whether the condition (\ref{AIR}) implies the existence of an $L$-tamed exhaustion.

Recently A. Borb\'ely \cite{bor} proved that the
Omori-Yau maximum principle for $\Delta$ holds without (\ref{happy}) in
Ratto-Rigoli-Setti's condition.
The relation between A. Borb\'ely's condition and the existence of
$\Delta$-tamed exhaustion also remains for further
study.

Now come applications. One of main applications of the Omori-Yau maximum principle is a
generalized Liouville-type theorem which gives a condition for the a
priori boundedness of solutions of Laplace-type differential
inequalities. This idea has originated from Cheng and Yau \cite{CY},
and been further extended by \cite{rrs}, \cite{suh}, \cite{cysung11},
etc. We can now extend the results of \cite{cysung11} to our semi-elliptic operator
$L$.
\begin{theorem}\label{main1}
Let $M$ be a smooth complete Riemannian manifold admitting an
$L$-tamed exhaustion function. Suppose that a $C^2$
function $f : M \rightarrow \mathbb{R}$  is bounded below and
satisfies $Lf \geq F(f)+ H(|\nabla f|)$
for real-valued continuous functions $F$ and $H$ on $\Bbb R$ such that $H(0)=0$.
\begin{enumerate}

\item If $\liminf_{x \rightarrow \infty}\frac{F(x)}{x^{\nu}}>0$ for
some $\nu>1,$ then $f$ is bounded such that $F(\sup f)\leq 0.$

\item If $\liminf_{x \rightarrow \infty}\frac{F(x)}{x^\nu}\leq 0$ for
any $\nu>1,$ then $\sup f= \infty $ or $f$ is bounded such that
$F(\sup f)\leq 0.$

\end{enumerate}

\end{theorem}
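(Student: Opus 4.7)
Both parts rest on the Omori-Yau almost maximum principle for $L$ that is established earlier in the paper from the existence of an $L$-tamed exhaustion.

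Part (2) is essentially immediate. If $\sup f = \infty$ the first alternative of (2) holds. Otherwise $f$ is bounded above, and the maximum principle yields a sequence $\{p_k\} \subset M$ with $f(p_k) \to \sup f$, $|\nabla f(p_k)| \to 0$, and $\limsup_{k\to\infty} Lf(p_k) \leq 0$. Inserting these into the hypothesis $Lf(p_k) \geq F(f(p_k)) + H(|\nabla f(p_k)|)$ and letting $k \to \infty$, the continuity of $F,H$ together with the condition $H(0) = 0$ give $0 \geq F(\sup f)$.

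For Part (1), once $\sup f < \infty$ is established the bound $F(\sup f) \leq 0$ follows from Part (2); the substance is to rule out $\sup f = \infty$. Suppose for contradiction that $\sup f = \infty$. After replacing $f$ by $f + c$, assume $f \geq 1$ on $M$. Pick $0 < \alpha < (\nu - 1)/2$ and let $\phi: \mathbb{R} \to \mathbb{R}$ be a $C^2$, strictly increasing, nonpositive function with $\phi(t) = -\frac{1}{\alpha t^\alpha}$ for $t \geq 1$. Then $\phi \circ f$ is bounded above, so the Omori-Yau principle produces $\{p_k\}$ with $t_k := f(p_k) \to \infty$, $\delta_k := \phi'(t_k)\,|\nabla f(p_k)| \to 0$, and $\limsup_k L(\phi(f))(p_k) \leq 0$. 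A chain-rule computation starting from the definition of $L$ in (\ref{eq111}) gives
\[
L(\phi(f)) = \phi'(f)\, Lf + \phi''(f)\, g(A \nabla f, \nabla f).
\]
Combining this with $Lf \geq F(f) + H(|\nabla f|)$, the growth $F(t) \geq c\, t^\nu$ for $t \gg 1$, and the pointwise estimate $g(A\nabla f, \nabla f) \leq (\sup_M Tr(A))\,|\nabla f|^2$ from (\ref{222}), a direct computation using $|\nabla f(p_k)| = \delta_k\, t_k^{\alpha+1}$ yields
\[
L(\phi(f))(p_k) \geq c\, t_k^{\nu - \alpha - 1} + t_k^{-\alpha-1} H(|\nabla f(p_k)|) - C_1\, \delta_k^2\, t_k^{\alpha}
\]
with $C_1 = (\alpha+1)\sup_M Tr(A)$. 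Since $\alpha < (\nu-1)/2$, one has $\nu - \alpha - 1 > \alpha$, so the third summand is $o(t_k^{\nu-\alpha-1})$ because $\delta_k \to 0$. Provided the middle term is suitably controlled, the right side tends to $+\infty$, contradicting $\limsup_k L(\phi(f))(p_k) \leq 0$.

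The principal obstacle is controlling the middle term $t_k^{-\alpha-1} H(|\nabla f(p_k)|)$: the Omori-Yau gradient bound gives only $|\nabla f(p_k)| \leq \delta_k\, t_k^{\alpha+1}$, which need not remain bounded. Handling this uses the continuity of $H$ and $H(0) = 0$ together with a careful choice of the maximizing sequence. A more robust alternative is to bypass the direct application of Omori-Yau and argue at the maximum point $p_\epsilon$ of $\phi(f) - \epsilon u$, where $u$ is the $L$-tamed exhaustion; using an upper-supporting function $v$ of $u$ at $p_\epsilon$, the first- and second-order conditions give $|\nabla f(p_\epsilon)| \leq \epsilon / \phi'(f(p_\epsilon))$ and $L(\phi(f))(p_\epsilon) \leq \epsilon$. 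Letting $\epsilon \to 0^+$ along a sequence for which $|\nabla f(p_\epsilon)|$ lies in a bounded set allows one to absorb the $H$-contribution into the dominant term $c\, t_k^{\nu - \alpha - 1}$ and close the contradiction.
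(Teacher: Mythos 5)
Your Part (2) is correct, and is actually a cleaner route than the paper's: when $\sup f<\infty$ you apply Theorem \ref{main3} directly to $f$ and pass to the limit in $Lf(p_k)\ge F(f(p_k))+H(|\nabla f(p_k)|)$ using $H(0)=0$, whereas the paper reaches the same conclusion through the substitution $G=(f+a)^{\frac{1-q}{2}}$. Your Part (1) is, up to notation, the paper's own argument: your $\phi(t)=-\frac{1}{\alpha t^{\alpha}}$ with $0<\alpha<\frac{\nu-1}{2}$ is a rescaling of the paper's $G=(f+a)^{\frac{1-q}{2}}$ with $\alpha=\frac{q-1}{2}$ (so that $\alpha<\frac{\nu-1}{2}$ is exactly $q<\nu$), and your chain-rule identity $L(\phi(f))=\phi'(f)\,Lf+\phi''(f)\,g(A\nabla f,\nabla f)$ reproduces the content of (\ref{eq5}) and leads to the analogue of (\ref{eq7}).

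However, Part (1) of your proposal is not a complete proof: the clause ``provided the middle term is suitably controlled'' is precisely the point that must be proved, and neither of your suggested remedies is substantiated. The Omori--Yau data give only $|\nabla f(p_k)|=\delta_k t_k^{\alpha+1}$, which may diverge, and you give no reason why a subsequence along which $|\nabla f(p_{\epsilon})|$ stays bounded should exist. Moreover, the obstacle is not removable for arbitrary continuous $H$ with $H(0)=0$: on $\mathbb{R}^2$ the function $f(x,y)=e^{x}$ is $C^\infty$, bounded below, unbounded above, and satisfies $\Delta f=e^{x}\ge e^{2x}-e^{4x}=F(f)+H(|\nabla f|)$ with $F(x)=x^{2}$ and $H(s)=-s^{4}$ (the inequality amounts to $s-s^{3}<1$ for all $s>0$), so without an additional hypothesis on $H$ (e.g. $H\ge0$, or a lower bound, or a growth restriction tied to $\nu$) the term $t_k^{-\alpha-1}H(|\nabla f(p_k)|)$ can overwhelm the dominant term $c\,t_k^{\nu-\alpha-1}$ and no contradiction arises. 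You should be aware that the paper's own proof has the identical lacuna --- it asserts that the left-hand side of (\ref{eq7}) diverges to $+\infty$ without addressing the contribution $H(|\nabla f(p_{\epsilon})|)/(f(p_{\epsilon})+a)^{q}$ --- so you have correctly isolated the weak point of the argument, but flagging the obstacle is not the same as overcoming it.
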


\begin{theorem}\label{main2}
Let $M$ be as in Theorem \ref{main1}. Suppose that a $C^2$ function
$f : M \rightarrow \mathbb{R}$ is bounded above and satisfies
$Lf \geq F(f)+ H(|\nabla f|)$ for $F$ and $H$ as in
the above theorem.
\begin{enumerate}
\item If $\liminf_{x \rightarrow -\infty}\frac{F(x)}{(-x)^{\nu}}>0$ for
some $\nu \leq 1,$ then $f$ is bounded such that $F(\inf f)\leq 0.$

\item If $\liminf_{x \rightarrow -\infty}\frac{F(x)}{(-x)^{\nu}}\leq 0$ for
any $\nu \leq 1,$ then $\inf f= -\infty $ or $f$ is bounded such that
$F(\inf f)\leq 0.$
\end{enumerate}

\end{theorem}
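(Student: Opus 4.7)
The plan is to dualize the proof of Theorem \ref{main1} via the substitution $g := -f$. Since $f$ is bounded above, $g$ is bounded below, and the hypothesis $Lf \geq F(f) + H(|\nabla f|)$ becomes $Lg \leq \tilde F(g) + \tilde H(|\nabla g|)$, where $\tilde F(y) := -F(-y)$ and $\tilde H(s) := -H(s)$ satisfy $\tilde H(0) = 0$. Under this involution, the growth hypothesis on $F$ at $-\infty$ is converted into a growth hypothesis on $\tilde F$ at $+\infty$, and the claim $F(\inf f) \leq 0$ becomes $\tilde F(\sup g) \geq 0$; this is exactly the type of statement addressable by the Omori--Yau principle in its ``minimum'' form, which itself follows by applying the already-established maximum principle for $L$ to $-f$.

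First I would establish the boundedness part of the conclusion. For Part (1), I argue by contradiction: if $\inf f = -\infty$, then along any sequence $\{q_k\}$ with $f(q_k) \to -\infty$ the hypothesis $\liminf_{x \to -\infty} F(x)/(-x)^\nu > 0$ (for some $\nu \leq 1$) forces $F(f(q_k)) \to +\infty$, and hence $Lf(q_k) \to +\infty$. Using the $L$-tamed exhaustion $u$ with $|\nabla u| \leq 1$ and $Lu \leq 1$, the bounded-coefficient assumption (\ref{222}), and a perturbation argument akin to the one used to construct Omori--Yau sequences, I would derive a contradiction with the approximate extremum structure of a suitable auxiliary function such as $-f + \varepsilon u$ (which is proper-from-above when $f$ is bounded above). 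For Part (2), the weaker hypothesis on $F$ forces the corresponding dichotomy between $\inf f = -\infty$ and the bounded case.

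Having shown $f$ to be bounded (and in particular bounded below), I then apply the minimum form of the Omori--Yau principle to obtain $\{p_k\} \subset M$ with $f(p_k) \to \inf_M f$, $|\nabla f(p_k)| \to 0$, and the appropriate sign control of $Lf$ along the sequence. Passing to the limit in $Lf \geq F(f) + H(|\nabla f|)$, using the continuity of $F$ and $H$ together with $H(0) = 0$, yields $F(\inf f) \leq 0$.

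The main obstacle is the first step, namely the a priori boundedness of $f$ from below in Part (1). Unlike Theorem \ref{main1}, whose superlinear regime $\nu > 1$ permits a direct Keller--Osserman-type comparison, the linear or sublinear regime $\nu \leq 1$ here requires a more delicate barrier construction that fully exploits the properness of the $L$-tamed exhaustion together with the uniform bounds (\ref{222}) on the coefficients of $L$. The challenge is to produce a comparison function whose $L$-image grows at a rate commensurate with the comparatively weak growth of $F$ at $-\infty$, yet which still dominates $f$ globally enough to rule out $\inf f = -\infty$.
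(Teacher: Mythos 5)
Your proposal does not close the essential gap, and the step you treat as routine is precisely the one that fails. After dualizing to $g=-f$, you invoke ``the minimum form of the Omori--Yau principle'' along a sequence $f(p_k)\to\inf_M f$ and claim that ``the appropriate sign control of $Lf$'' lets you pass to the limit in $Lf\ge F(f)+H(|\nabla f|)$ to get $F(\inf f)\le 0$. But the minimum form (i.e.\ Theorem~\ref{main3} applied to $-f$) gives $\liminf_k Lf(p_k)\ge 0$, a \emph{lower} bound on $Lf$, pointing in the same direction as the hypothesis $Lf\ge F(f)+H(|\nabla f|)$; no upper bound on $F(f(p_k))$ results, so $F(\inf f)\le 0$ does not follow. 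Equivalently, in your dual picture both $Lg\le\tilde F(g)+\tilde H(|\nabla g|)$ and $\limsup_k Lg(p_k)\le 0$ are upper bounds on $Lg$, and they combine to nothing. The other half of your plan, the a priori lower bound on $f$ in part (1), is only announced: you concede it ``requires a more delicate barrier construction'' and never produce one, and even your preliminary claim that $F(f(q_k))\to+\infty$ forces $Lf(q_k)\to+\infty$ is unjustified, since $H(|\nabla f(q_k)|)$ is uncontrolled along an arbitrary sequence.

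You should also know this is not a defect you could have repaired: the statement is false as written, and the paper's own proof contains the same directional error. The paper applies the minimum principle to $G=(-f+a)^{\frac{1-q}{2}}$ with $q<1$; since this $G$ is an increasing function of $-f$, the condition $G(p_\epsilon)\to\inf G$ forces $f(p_\epsilon)\to\sup_M f$, not $\inf_M f$, so what \eqref{eq8} actually yields in the limit is $F(\sup f)\le 0$. A concrete counterexample to part (1): on flat $\mathbb{R}^2$ (which admits an $L$-tamed exhaustion by Royden's theorem) take $L=\Delta$, $f(x,y)=1+\sin x$, $F(t)=1-t$, $H\equiv 0$; then $\Delta f=-\sin x=F(f)$, $f$ is bounded above, and $\liminf_{x\to-\infty}F(x)/(-x)=1>0$ with $\nu=1$, yet $F(\inf f)=F(0)=1>0$. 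So neither your argument nor the paper's establishes Theorem~\ref{main2}; what these methods actually prove concerns $F(\sup f)$, and a genuine statement about $F(\inf f)$ would require reversing the differential inequality or adding hypotheses.
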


As a corollary, we give a semi-elliptic generalization of Liouville's theorem stating that any $f \in C^{2}(\mathbb{R}^{2})$ which is subharmonic $(\Delta f \geq 0)$ and bounded above must be constant.
\begin{corollary}\label{cor3}
Let $M$ be as in Theorem \ref{main1}.
\begin{enumerate}

\item There exits no $f \in C^{2}(M)$ which is bounded above and
$L f \geq c$ for a constant $c>0$.

\item Any $f \in C^{2}(M)$ which is non-positive and satisfies
$L f \geq c|f|^d$ for some positive constants $c$
and $d$ must be identically zero.
\end{enumerate}

\end{corollary}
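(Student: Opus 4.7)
My plan is to deduce both parts as immediate specializations of Theorem \ref{main2}, with appropriate choices of $F$, $H$, and the exponent $\nu$.

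For part (1), I would take $F\equiv c$ and $H\equiv 0$: both are continuous on $\mathbb R$ and $H(0)=0$, and the hypothesis $Lf\geq c$ is exactly $Lf\geq F(f)+H(|\nabla f|)$, with $f$ bounded above by assumption. Choosing $\nu=0$ (which satisfies $\nu\leq 1$), one has
\[
\liminf_{x\to-\infty}\frac{F(x)}{(-x)^{\nu}}=c>0,
\]
so case (1) of Theorem \ref{main2} forces $f$ to be bounded with $F(\inf f)\leq 0$, i.e.\ $c\leq 0$. This contradicts $c>0$, so no such $f$ can exist.

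For part (2), I would set $F(x)=c|x|^d$ and $H\equiv 0$; both are continuous on $\mathbb R$ and $H(0)=0$. Since $f\leq 0$, one has $|f|=-f$, so
\[
Lf\geq c|f|^d=F(f)+H(|\nabla f|),
\]
and $f$ is bounded above by $0$. Taking $\nu=0$ again,
\[
\liminf_{x\to-\infty}\frac{F(x)}{(-x)^{\nu}}=\liminf_{x\to-\infty}c|x|^d=+\infty>0,
\]
so case (1) of Theorem \ref{main2} yields $f$ bounded with $F(\inf f)=c|\inf f|^d\leq 0$. Since $c,d>0$, this forces $\inf f=0$, and combined with $f\leq 0$ we conclude $f\equiv 0$.

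Since the corollary reduces entirely to plugging specific $F$, $H$, and $\nu$ into Theorem \ref{main2}, there is essentially no genuine obstacle; the only subtle point is to confirm that $\nu=0$ is admissible in case (1), which it is, as the hypothesis $\nu\leq 1$ carries no positivity restriction. All the analytic content has already been absorbed into Theorem \ref{main2}.
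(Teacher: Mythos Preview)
Your proposal is correct and follows essentially the same approach as the paper: both parts are derived by applying Theorem \ref{main2} with $F\equiv c$, $H\equiv 0$ for part (1) and $F(x)=c|x|^d$, $H\equiv 0$ for part (2), obtaining the contradiction $c\leq 0$ in the first case and $c|\inf f|^d\leq 0$ (hence $f\equiv 0$) in the second. Your explicit verification that $\nu=0$ works is a harmless elaboration the paper omits.
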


\begin{remark}
Theorem \ref{main1} and \ref{main2} can be easily extended to any linear second-order semi-elliptic operator $$L
+h,$$ for $h\in C^\infty(M)$ just by considering $Lf\geq F(f)-hf+ H(|\nabla f|).$
\end{remark}

There are many other conditions under which the Omori-Yau maximum
principle holds, and also lots of Liouville-type
theorems for a variety of subharmonic functions. For instance, the
readers may be referred to \cite{karp1, karp, Leung, nad, prs2,
prs1, prs, take, yibing, xu}, and references therein.

\section{Generalized Omori-Yau maximum principle}

\begin{theorem}\label{main3}
Let $M$ be a smooth complete Riemannian $n$-manifold admitting an
$L$-tamed exhaustion function. Then for every
real-valued $C^2$ function $f$ on $M$ which is bounded above, there
exists a sequence $\{p_{k}\}$ on $M$ satisfying the following
properties:
$$\lim_{k\rightarrow \infty}|\nabla f(p_{k})|=0,\
\limsup_{k\rightarrow \infty}Lf(p_{k})\leq 0,
~\textrm{and}~ \lim_{k\rightarrow \infty}f(p_{k})=\sup_{M}f.$$
\end{theorem}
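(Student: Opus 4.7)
The plan is to adapt the Kim--Lee strategy of \cite{ktk07} to the semi-elliptic operator $L$, using the $L$-tamed exhaustion $u$ as a barrier: perturb $f$ by a logarithmic correction in $u$ so that the perturbed function attains its supremum, then pass to an infinitesimal inequality via a $C^{2}$ function that lies below the perturbation and agrees with it at the sup point.

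Assume $f^{\ast}:=\sup_{M}f<\infty$, and for each $\varepsilon>0$ consider the continuous function $g_{\varepsilon}:=f-\varepsilon\log(1+u)$. Since $u\geq 0$ is a proper exhaustion and $f$ is bounded above, $g_{\varepsilon}\to-\infty$ at infinity, so $g_{\varepsilon}$ attains its supremum at some $p_{\varepsilon}\in M$. At $p_{\varepsilon}$ let $v$ be the $C^{2}$ upper-supporting function of $u$ provided by the definition of an $L$-tamed exhaustion, so that $v(p_{\varepsilon})=u(p_{\varepsilon})$, $v\geq u$ on a neighborhood $U_{p_{\varepsilon}}$, $|\nabla v(p_{\varepsilon})|\leq 1$, and $Lv(p_{\varepsilon})\leq 1$. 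Because $v\geq u\geq 0$, the $C^{2}$ function $\tilde g_{\varepsilon}:=f-\varepsilon\log(1+v)$ satisfies $\tilde g_{\varepsilon}\leq g_{\varepsilon}$ on $U_{p_{\varepsilon}}$ with equality at $p_{\varepsilon}$, hence attains a local maximum at $p_{\varepsilon}$.

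Consequently $\nabla\tilde g_{\varepsilon}(p_{\varepsilon})=0$ and $\operatorname{hess}(\tilde g_{\varepsilon})(p_{\varepsilon})$ is negative semi-definite. The semi-ellipticity $A\geq 0$ is used precisely here: for self-adjoint $A\geq 0$ and $B\leq 0$, $\operatorname{Tr}(AB)=\operatorname{Tr}(A^{1/2}BA^{1/2})\leq 0$, so $\operatorname{Tr}(A\circ\operatorname{hess}(\tilde g_{\varepsilon}))(p_{\varepsilon})\leq 0$. Combined with the vanishing of $g(V,\nabla\tilde g_{\varepsilon})(p_{\varepsilon})$, this yields $L\tilde g_{\varepsilon}(p_{\varepsilon})\leq 0$. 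A chain-rule computation gives
\[
L(\log(1+v))=\frac{Lv}{1+v}-\frac{g(A\nabla v,\nabla v)}{(1+v)^{2}},
\]
which at $p_{\varepsilon}$ is bounded above by $(1+u(p_{\varepsilon}))^{-1}\leq 1$, using $Lv(p_{\varepsilon})\leq 1$, $u(p_{\varepsilon})\geq 0$, and $g(A\nabla v,\nabla v)\geq 0$. Hence $Lf(p_{\varepsilon})\leq\varepsilon$, while the identity $\nabla f(p_{\varepsilon})=\varepsilon(1+u(p_{\varepsilon}))^{-1}\nabla v(p_{\varepsilon})$ forces $|\nabla f(p_{\varepsilon})|\leq\varepsilon$.

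To conclude, for any fixed $x_{0}\in M$ the inequality $g_{\varepsilon}(p_{\varepsilon})\geq g_{\varepsilon}(x_{0})$ combined with $\log(1+u(p_{\varepsilon}))\geq 0$ gives $f(p_{\varepsilon})\geq f(x_{0})-\varepsilon\log(1+u(x_{0}))$; picking $x_{0}$ with $f(x_{0})$ arbitrarily close to $f^{\ast}$ and then shrinking $\varepsilon$ forces $f(p_{\varepsilon})\to f^{\ast}$. A diagonal choice $\varepsilon_{k}\to 0$ with $p_{k}:=p_{\varepsilon_{k}}$ produces the required sequence. The main (and essentially sole) new point beyond the Laplacian treatment in \cite{ktk07} is the trace inequality $\operatorname{Tr}(AB)\leq 0$ for $A\geq 0$, $B\leq 0$; the remainder is structurally identical, and the non-smoothness of $u$ is absorbed, as intended, by the upper-supporting function mechanism.
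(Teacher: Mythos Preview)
Your proof is correct and is in several respects cleaner than the paper's. The overall architecture is the same---perturb $f$ by the exhaustion so that a maximum exists, then pass to the $C^{2}$ upper-supporting function at that maximum---but the implementation differs in two substantive ways. First, you use the additive logarithmic barrier $f-\varepsilon\log(1+u)$, whereas the paper uses the multiplicative cutoff $(1-\varepsilon u)f$ on the sublevel set $\{u<1/\varepsilon\}$; your choice avoids the need to normalize so that $\sup_{M}f>0$ and to track the sign of $f$ near the maximizer. Second, and more interestingly, you handle the second-order term by the coordinate-free observation that $\operatorname{Tr}(A\circ\operatorname{hess}(\tilde g_{\varepsilon}))\leq 0$ at a local maximum whenever $A\geq 0$, while the paper diagonalizes $A$ at $p_{\varepsilon}$ in normal coordinates and carries out an explicit product-rule computation involving auxiliary operators $\widetilde{\nabla}$, $\widetilde{\nabla}_{1}$. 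A consequence worth noting is that your argument never invokes the standing hypothesis \eqref{222} that $\sup_{M}\operatorname{Tr}(A)+\sup_{M}|V|<\infty$: the bounds $|\nabla v(p_{\varepsilon})|\leq 1$ and $Lv(p_{\varepsilon})\leq 1$ from the $L$-tamed exhaustion suffice to give $|\nabla f(p_{\varepsilon})|\leq\varepsilon$ and $Lf(p_{\varepsilon})\leq\varepsilon$ directly. The paper's route, by contrast, uses the uniform bound on the eigenvalues of $A$ and on $|V|$ to control the constants $d_{1},e_{1}$ arising from the diagonalization as $p_{\varepsilon}$ varies. So your approach is both shorter and nominally more general; the paper's has the virtue of making the dependence on the coefficients of $L$ fully explicit.
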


\begin{proof}
The proof is similar to the method in the article \cite{ktk07}, which uses a sequence of compact-supported approximations of $f$, which obviously attain their maximums.  Without loss of generality, we may assume that $\sup_{M}f>0$ by adding some
positive constant. Take an $L$-tamed exhaustion function $u$.

Now, we choose a point $p \in M$ such that $f(p)>0$. For each
$\epsilon>0$, let $$X_{\epsilon}=\{x \in M |
u(x)<\frac{1}{\epsilon}\}.$$ Then $X_{\epsilon}$ forms an increasing
sequence of open subsets of $M$ and each closure
$\overline{X}_{\epsilon}$ gives rise to a compact exhaustion of $M$
as $\epsilon \downarrow 0.$

Taking a positive constant $r$ such that $p \in X_{r}$. The
continuous function $$(1-ru(x))f(x)$$ vanishes on the boundary of
$X_{r}$. Since $(1-ru(p))f(p)>0$, and since $\overline{X}_{r}$ is
compact, the function $(1-ru)f$ attains its maximum value in the set
$X_{r}$, say at $p_{r} \in X_{r}$, respectively. It is obvious that
the maximum value is positive. From now on, we fix $r$.

Let $\epsilon$ be any positive constant smaller than $r$. Then $p
\in X_{r} \subset X_{\epsilon} $ and $$(1-\epsilon u(p))f(p)\geq
(1-ru(p))f(p)>0.$$ In the same way, the function $(1-\epsilon u)f$
attains a positive maximum value in the set $X_{\epsilon}$, say at
$p_{\epsilon} \in X_{\epsilon}.$

Since $A$, in the notation \eqref{eq111}, is symmetric, it is
diagonalizable at each point in an orthonormal basis, so we can take
a normal coordinate $(x_{1}, \cdots, x_{n})$ around $p_{\epsilon}
\in M$ such that $A$ at $p_{\epsilon}$ is represented as a diagonal
matrix, and hence
\begin{equation}\label{eq1}
L h|_{p_{\epsilon}}
=\sum_{l}a_{ll}(p_{\epsilon})\frac{\partial^2}{\partial
x_{l}^2}h|_{p_{\epsilon}}+\sum_{l}
a_{l}(p_{\epsilon})\frac{\partial}{\partial x_{l}}h
|_{p_{\epsilon}},
\end{equation}
for a real-valued function $h$ on $M$, where each
$a_{ll}(p_{\epsilon})$ is nonnegative, and the entries
$a_{ll}(p_{\epsilon})$ and $|a_{l}(p_{\epsilon})|$ are bounded above
as $p_{\epsilon}$ varies by \eqref{222}. For a notational
convenience, let's introduce locally-defined differential operators
\begin{equation}\label{eq10}
\widetilde{\nabla}:=(a_{11}(p_{\epsilon})\frac{\partial}{\partial
x_{1}}, ~\cdots~, a_{nn}(p_{\epsilon})\frac{\partial}{\partial
x_{n}})~~\textrm{and}~~
\widetilde{\nabla}_{1}:=a_{1}(p_{\epsilon})\frac{\partial}{\partial
x_{1}}+ ~\cdots~+ a_{n}(p_{\epsilon})\frac{\partial}{\partial
x_{n}},
\end{equation}
and put $d_{l}=a_{ll}(p_{\epsilon})$ and
$e_{l}=\sqrt{n}|a_{l}(p_{\epsilon})|$ for $1 \leq l \leq n$.

If $h$ has an extremal value at point $p_{\epsilon}$,
$$Lh |_{p_{\epsilon}}=\sum_{l}a_{ll}(p_{\epsilon})\frac{\partial^2}{\partial x_{l}^2}h|_{p_{\epsilon}}.$$

Furthermore, if a real-valued function AB on $M$ has an extremal
value at $p_{\epsilon}$, then one can obtain
\begin{equation}\label{eq2}
L(AB)|_{p_{\epsilon}}=(LA|_{p_{\epsilon}}-
\widetilde{\nabla}_{1}A|_{p_{\epsilon}})B(p_{\epsilon})+2\widetilde{\nabla}A|_{p_{\epsilon}}
\cdot \nabla B|_{p_{\epsilon}}
+A(p_{\epsilon})(LB|_{p_{\epsilon}}-
\widetilde{\nabla}_{1}B|_{p_{\epsilon}}).
\end{equation}

Note that $\widetilde{\nabla}A|_{p_{\epsilon}} \cdot \nabla
B|_{p_{\epsilon}}= \nabla A|_{p_{\epsilon}} \cdot
\widetilde{\nabla} B|_{p_{\epsilon}}.$

We may assume that $d_{1}$ and $e_{1}$ are the largest of $\{d_{1},
\cdots ,d_{n}\}$ and $\{ e_{1} , \cdots , e_{n}\}$ respectively.
Consider a $C^2$ upper-supporting function $v : U \rightarrow
\mathbb{R}$ for $u$ at $p_{\epsilon}$, where $U$ is an open
neighborhood of $p_{\epsilon}.$ Then we get $$ |\widetilde{\nabla}
v|_{p_{\epsilon}} |\leq d_{1},~ |\widetilde{\nabla}_{1}
v|_{p_{\epsilon}} |\leq e_{1},~ \textrm{and}~ L
v|_{p_{\epsilon}} \leq 1.$$ By taking $U$ further small, we may
assume that $U \subset X_{\epsilon}$ and $f$ is positive on $U$,
since $f(p_\epsilon)>0$. For every $x \in U$,
$$(1-\epsilon v(x))f(x) \leq (1-\epsilon u(x))f(x) \leq (1-\epsilon
u (p_{\epsilon}))f(p_{\epsilon})=(1-\epsilon
v(p_{\epsilon}))f(p_{\epsilon}).$$ Since $p_{\epsilon}$ is a local
maximum point of $(1-\epsilon v)f$, we get
$$\nabla [(1-\epsilon v)f]|_{p_{\epsilon}}= \widetilde{\nabla} [(1-\epsilon v)f]|_{p_{\epsilon}}=
\widetilde{\nabla}_{1}[(1-\epsilon v)f]|_{p_{\epsilon}}=0 .$$ By a
simple calculation, we have
$$ (1-\epsilon v(p_{\epsilon}))| \widetilde{\nabla} f(p_{\epsilon}) |=
\epsilon  | \widetilde{\nabla} v(p_{\epsilon}) | f(p_{\epsilon})
\leq \epsilon d_{1}( \sup_{M}f).$$ From
$v(p_{\epsilon})=u(p_{\epsilon})$, we get
$$(1-\epsilon u(p_{\epsilon}))| \widetilde{\nabla} f(p_{\epsilon}) | \leq \epsilon d_{1}(\sup_{M}f).$$
Also, because $ X_{r} \subset X_{\epsilon}$, we have
$$(1-\epsilon u(p_{r}))f(p_{r}) \leq (1-\epsilon
u(p_{\epsilon}))f(p_{\epsilon}).$$ This implies that
\begin{eqnarray*}
(1-ru(p_{r}))f(p_{r})|\widetilde{\nabla}f(p_{\epsilon})|&\leq&(1-\epsilon
u(p_{r}))f(p_{r})|\widetilde{\nabla}f(p_{\epsilon}) | \leq
(1-\epsilon
u(p_{\epsilon}))f(p_{\epsilon})|\widetilde{\nabla}f(p_{\epsilon})
|\\&\leq& f(p_{\epsilon})\epsilon d_{1} (\sup_{M}f) \leq \epsilon
d_{1}(\sup_{M}f)^{2}.
\end{eqnarray*}
So, we conclude that
$$ |\widetilde{\nabla}f(p_{\epsilon}) | \leq  \epsilon \frac{
d_{1}(\sup_{M}f)^{2}}{(1-r u(p_{r})) f(p_{r})}.$$ Note that
$K:=\frac{d_{1}(\sup_{M}f)^{2}}{(1-ru(p_{r})) f(p_{r})}$ is a
positive constant independent of $\epsilon$ with $\epsilon<r$.
Therefore, we obtain $$\lim_{\epsilon\rightarrow
0}|\widetilde{\nabla} f (p_{\epsilon})|=0.$$ By the same method as
above, we have
$$ | \nabla f(p_{\epsilon}) | \leq  \epsilon \frac{
(\sup_{M}f)^{2}}{(1-r u(p_{r})) f(p_{r})} \ \ \ \ \textrm{and}\ \ \ \
|\widetilde{\nabla}_{1}f(p_{\epsilon}) | \leq  \epsilon \frac{
e_{1}(\sup_{M}f)^{2}}{(1-r u(p_{r})) f(p_{r})}$$ Therefore, we get
$$\lim_{\epsilon \rightarrow 0}|\nabla f (p_{\epsilon})|=0\  \  \  \
\textrm{and}\  \  \ \ \lim_{\epsilon \rightarrow
0}|\widetilde{\nabla}_{1} f (p_{\epsilon})|=0.$$ Now we prove
$$\limsup_{\epsilon \rightarrow
0}Lf(p_{\epsilon})\leq 0.$$ Since $p_{\epsilon}$ is
a local maximum point of $(1-\epsilon v)f$, we have
$L((1-\epsilon v )f) \leq 0$ at point
$p_{\epsilon}.$ Using the formula (\ref{eq2}),
\begin{eqnarray*}
[L((1-\epsilon v )f)]|_{p_{\epsilon}} &=&-\epsilon
Lv|_{p_{\epsilon}}f(p_{\epsilon})+\epsilon
\widetilde{\nabla}_{1} v|_{p_{\epsilon}}f(p_{\epsilon})-2\epsilon
\nabla v|_{p_{\epsilon}} \cdot \widetilde{\nabla} f|_{p_{\epsilon}}+
(1-\epsilon v(p_{\epsilon}))Lf|_{p_{\epsilon}}
\\
& &- (1-\epsilon v(p_{\epsilon}))
\widetilde{\nabla}_{1}f|_{p_{\epsilon}}\\ &\leq& 0.
\end{eqnarray*}
Hence
\begin{eqnarray*}
(1-\epsilon v(p_{\epsilon}))Lf|_{p_{\epsilon}}
&\leq& 2\epsilon \nabla v|_{p_{\epsilon}} \cdot \widetilde{\nabla}
f|_{p_{\epsilon}} +\epsilon
Lv|_{p_{\epsilon}}f(p_{\epsilon})-\epsilon
\widetilde{\nabla}_{1} v|_{p_{\epsilon}}f(p_{\epsilon})+(1-\epsilon
v(p_{\epsilon}))
\widetilde{\nabla}_{1}f|_{p_{\epsilon}}\\
&\leq& \epsilon (2|\widetilde{\nabla }f|_{p_{\epsilon}} |+
\sup_{M}f+ e_{1}\sup_{M}f)+|\widetilde{\nabla}_{1}f|_{p_{\epsilon}}|
(1-\epsilon v(p_{\epsilon}))\\
&\leq& \epsilon ( 2\epsilon K + \sup_{M}f+ e_{1}\sup_{M}f
)+|\widetilde{\nabla}_{1}f|_{p_{\epsilon}}| (1-\epsilon
v(p_{\epsilon})).
\end{eqnarray*}
Since $1-\epsilon u(p_{\epsilon})=1-\epsilon v(p_{\epsilon})>0,$
we get
$$Lf|_{p_{\epsilon}}\leq \epsilon \frac {(2
\epsilon K + \sup_{M}f + e_{1}\sup_{M}f)} {(1-\epsilon
u(p_{\epsilon}))}+|\widetilde{\nabla}_{1}f|_{p_{\epsilon}}|.$$ As
above, we obtain
\begin{eqnarray*}
Lf|_{p_{\epsilon}}&\leq& \epsilon \frac {(2
\epsilon K + \sup_{M}f + e_{1}\sup_{M}f)(\sup_{M}f)} {(1-\epsilon
u(p_{\epsilon}))f(p_{\epsilon})}+\epsilon \frac{
e_{1}(\sup_{M}f)^{2}}{(1-r u(p_{r})) f(p_{r})}\\ &\leq& \epsilon
\frac {(2 \epsilon K + \sup_{M}f + e_{1}\sup_{M}f)(\sup_{M}f)} {(1-r
u(p_{r}))f(p_{r})}+\epsilon \frac{ e_{1}(\sup_{M}f)^{2}}{(1-r
u(p_{r})) f(p_{r})}.
\end{eqnarray*}
Therefore, we conclude that there is a positive constant $C$
independent of $\epsilon$ such that $Lf|_{p_{\epsilon}} \leq C \epsilon.$

It only remains to show that $\lim_{\epsilon\rightarrow
0}f(p_{\epsilon})=\sup_{M} f.$

Let $\eta$ be any positive constant such that $\sup_{M} f>\eta $. We
may choose a point $q \in M$ such that
$f(q)>\sup_{M}f-\frac{\eta}{2}$. Also choosing a positive constant
$\epsilon$ with $\epsilon < r$ such that $q\in X_{\epsilon}$ and
$\epsilon u(q) f(q) \leq \frac{\eta}{2}$ , we get
$$(1-\epsilon u(p_{\epsilon}))f(p_{\epsilon}) \geq
(1-\epsilon u(q))f(q) \geq \sup_{M}f-\eta.$$ Since $0<1-\epsilon
u(p_{\epsilon})<1,$ we have
$$f(p_{\epsilon})\geq
\frac{\sup_{M}f-\eta}{1-\epsilon u(p_{\epsilon})} >
\sup_{M}f-\eta,$$ completing the proof.

\end{proof}

\begin{remark}
L.J. Alias, D. Impera, and M. Rigoli  used their generalized
Omori-Yau maximum principle to obtain certain estimates
of higher order mean curvatures of hypersurfaces in some warped
product spaces, and  D. Impera \cite{DI} similarly obtained such
estimates for spacelike hypersurfaces in Lorentzian manifolds.
\end{remark}

\section{Proof of Theorem \ref{main1}}
We follow the idea of \cite{suh, cysung11}. We may choose a constant $a$
such that $f+a>0,$ because $f$ is bounded below. Let $G:M
\rightarrow \mathbb{R}^{+}$ be a $C^2$ function defined by
$G=(f+a)^{\frac{1-q}{2}}$ where $q>1$ is a constant.

Since $G$ is bounded below, Theorem \ref{main3} implies that for
any $\delta >0$ there exists a point $p_{\epsilon} \in M$ such that
\begin{equation}\label{eq3}
|\nabla G(p_{\epsilon})|<\delta,~ |\widetilde{\nabla}
G(p_{\epsilon})| <\delta,~
LG(p_{\epsilon})>-\delta, ~ \textrm{and} ~ \inf
G+\delta>G(p_{\epsilon}),
\end{equation} where $\widetilde{\nabla}$ is defined by
(\ref{eq10}). Note that $G(p_{\epsilon}) \rightarrow \inf G$ and
$f(p_{\epsilon}) \rightarrow \sup f$ as $\delta \rightarrow 0.$

By a direct calculation,
\begin{equation}\label{eq4}
\widetilde{\nabla}G|_{p_{\epsilon}}=
(\frac{1-q}{2})G(p_{\epsilon})^{\frac{q+1}{q-1}}\widetilde{\nabla}
f|_{p_{\epsilon}}.
\end{equation}
\begin{lemma}
\begin{equation}\label{eq5}
LG|_{p_{\epsilon}}=
-(\frac{q+1}{2})G(p_{\epsilon})^{\frac{2}{q-1}}\nabla
G|_{p_{\epsilon}} \cdot \widetilde{\nabla}
f|_{p_{\epsilon}}+(\frac{1-q}{2})G(p_{\epsilon})^{\frac{q+1}{q-1}}Lf|_{p_{\epsilon}}.
\end{equation}
\end{lemma}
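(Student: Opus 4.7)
The plan is a direct chain-rule computation carried out in the diagonalizing normal coordinates $(x_1,\ldots,x_n)$ at $p_\epsilon$ already set up in \eqref{eq1}. Writing $\psi(t)=(t+a)^{(1-q)/2}$ so that $G=\psi(f)$, I first compute at $p_\epsilon$:
\begin{equation*}
\frac{\partial G}{\partial x_l}=\psi'(f)\frac{\partial f}{\partial x_l},\qquad
\frac{\partial^2 G}{\partial x_l^2}=\psi''(f)\Big(\frac{\partial f}{\partial x_l}\Big)^{2}+\psi'(f)\frac{\partial^2 f}{\partial x_l^2}.
\end{equation*}
Plugging into the coordinate form of $L$ from \eqref{eq1} and separating the diagonal second-order part from the first-order part, I obtain
\begin{equation*}
LG|_{p_\epsilon}=\psi''(f(p_\epsilon))\sum_{l} a_{ll}(p_\epsilon)\Big(\tfrac{\partial f}{\partial x_l}\Big)^{2}+\psi'(f(p_\epsilon))\,Lf|_{p_\epsilon}.
\end{equation*}

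Next I would recognize the quadratic sum as the pairing $\nabla f\cdot\widetilde{\nabla}f$ coming from the definition \eqref{eq10} of $\widetilde{\nabla}$: in the chosen coordinates $\nabla f=(\partial_1 f,\ldots,\partial_n f)$ and $\widetilde{\nabla}f=(a_{11}\partial_1 f,\ldots,a_{nn}\partial_n f)$, so $\sum_l a_{ll}(\partial_l f)^2=\nabla f\cdot\widetilde{\nabla}f$. Factoring out $\psi'(f)$ then gives $\nabla f\cdot\widetilde{\nabla}f=\psi'(f)^{-1}\nabla G\cdot\widetilde{\nabla}f$ via \eqref{eq4}, so
\begin{equation*}
LG|_{p_\epsilon}=\frac{\psi''(f(p_\epsilon))}{\psi'(f(p_\epsilon))}\,\nabla G|_{p_\epsilon}\cdot\widetilde{\nabla}f|_{p_\epsilon}+\psi'(f(p_\epsilon))\,Lf|_{p_\epsilon}.
\end{equation*}

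The last step is purely algebraic: with $\alpha=(1-q)/2$ one has $\psi'(t)=\alpha(t+a)^{\alpha-1}$ and $\psi''(t)/\psi'(t)=(\alpha-1)/(t+a)$, while the relation $(f+a)=G^{1/\alpha}$ lets me re-express $(f+a)^{\alpha-1}=G^{(q+1)/(q-1)}$ and $(f+a)^{-1}=G^{2/(q-1)}$. Substituting $\alpha-1=-(q+1)/2$ and $\alpha=(1-q)/2$ converts the two coefficients into $-\frac{q+1}{2}G^{2/(q-1)}$ and $\frac{1-q}{2}G^{(q+1)/(q-1)}$, which is exactly \eqref{eq5}.

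There is no real obstacle here; the only thing to be careful about is keeping the exponent arithmetic straight in the substitution $f+a=G^{2/(1-q)}$, since a sign slip in $\alpha-1$ or $1/\alpha$ would derail the identification. The use of the coordinate system diagonalizing $A$ is essential: it is what lets the sum $\sum_l a_{ll}(\partial_l f)^2$ be recognized as $\nabla G\cdot\widetilde{\nabla}f$ rather than as a more opaque quadratic form, and it is the same tool used earlier for \eqref{eq1}--\eqref{eq2}.
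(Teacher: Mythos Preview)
Your argument is correct and is essentially the same chain-rule computation in the diagonalizing normal coordinates that the paper carries out; you have simply organized it a bit more cleanly by introducing $\psi(t)=(t+a)^{(1-q)/2}$ and collecting the $\psi'(f)$-terms into $\psi'(f)\,Lf$ in one step rather than treating the second- and first-order parts of $L$ separately. The exponent bookkeeping is right, and the identification $\sum_l a_{ll}(\partial_l f)^2=\nabla f\cdot\widetilde{\nabla}f=\psi'(f)^{-1}\nabla G\cdot\widetilde{\nabla}f$ is exactly what the paper uses.
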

\begin{proof}
By (\ref{eq1}), evaluating $LG$ at $p_{\epsilon}$,
we have
$$LG|_{p_{\epsilon}}
=\sum_{l}a_{ll}(p_{\epsilon})\frac{\partial^2}{\partial
x_{l}^2}G|_{p_{\epsilon}}+\sum_{l}
a_{l}(p_{\epsilon})\frac{\partial}{\partial x_{l}}G
|_{p_{\epsilon}},~ \textrm{where}~ 1 \leq l \leq n.$$ By a simple
calculation, one gets
$$\sum_{l}a_{ll}(p_{\epsilon})\frac{\partial^2}{\partial x_{l}^2}G|_{p_{\epsilon}}=-(\frac{q+1}{2})G(p_{\epsilon})^{\frac{2}{q-1}}\nabla
G|_{p_{\epsilon}} \cdot \widetilde{\nabla} f|_{p_{\epsilon}}+
\sum_{l}a_{ll}(p_{\epsilon})(\frac{1-q}{2})(f(p_{\epsilon})+a)^{\frac{-1-q}{2}}\frac{\partial^2}{\partial
x_{l}^2}f|_{p_{\epsilon}}$$ and $$\sum_{l}
a_{l}(p_{\epsilon})\frac{\partial}{\partial x_{l}}G
|_{p_{\epsilon}}=\sum_{l}
a_{l}(p_{\epsilon})(\frac{1-q}{2})(f(p_{\epsilon})+a)^{\frac{-1-q}{2}}\frac{\partial}{\partial
x_{l}}f|_{p_{\epsilon}}.$$ This yields the desired equality.
\end{proof}
By plugging (\ref{eq4}) to (\ref{eq5}), we have
$$(\frac{1-q}{2})G(p_{\epsilon})^{\frac{2q}{q-1}}Lf|_{p_{\epsilon}}=G(p_{\epsilon})LG|_{p_{\epsilon}}-(\frac{q+1}{q-1})\nabla
G(p_{\epsilon}) \cdot \widetilde{\nabla} G(p_{\epsilon}).$$ Applying
(\ref{eq3}) gives
\begin{equation}\label{eq6}
(\frac{1-q}{2})G(p_{\epsilon})^{\frac{2q}{q-1}}Lf|_{p_{\epsilon}}>
G(p_{\epsilon})(-\delta)-(\frac{q+1}{q-1})\delta^{2}.
\end{equation}
Applying $Lf \geq F(f)+ H(|\nabla f|)$ and
replacing $G$ by $(f+a)^{\frac{1-q}{2}},$ we have
\begin{equation}\label{eq7}
\frac{F(f(p_{\epsilon}))+H(|\nabla
f(p_{\epsilon})|)}{(f(p_{\epsilon})+a)^{q}}<(\frac{2\delta}{q-1})
\frac{1}{(f(p_{\epsilon})+a)^{\frac{q-1}{2}}}+\frac{2(q+1)}{(q-1)^{2}}\delta^{2}.
\end{equation}
Assume that $\sup f< \infty.$ Then as $\delta \rightarrow 0,$ since $\nabla G|_{p_{\epsilon}}\rightarrow 0,$ $G$ is bounded below by a positive constant, and
$$\nabla G|_{p_{\epsilon}}=
(\frac{1-q}{2})G(p_{\epsilon})^{\frac{q+1}{q-1}}\nabla
f|_{p_{\epsilon}},$$ we have $H(|\nabla f(p_{\epsilon})|)\rightarrow
0.$ Also, the $\mathbf{RHS}$ of (\ref{eq7}) converges to $0$ while
the $\mathbf{LHS}$ of (\ref{eq7}) converges to $\frac{F(\sup
f)}{(\sup f + a)^{q}}$ as $\delta \rightarrow 0.$ Thus, we get
$F(\sup f)\leq 0.$

Finally, it remains to show that when $\liminf_{x\rightarrow
\infty}\frac{F(x)}{x^{\nu}}>0$ for some $\nu >1,$ $f$ must be
bounded. Assume to the contrary that $\sup f = \infty.$ Then for $q< \nu,$ the
$\mathbf{RHS}$ of (\ref{eq7}) converges to $0$, while the $\mathbf{LHS}$ of (\ref{eq7}) diverges to
$\infty$ as $\delta
\rightarrow 0.$ This is a desired contradiction, which completes the proof.

\section{Proof of Theorem \ref{main2}}
We again follow the idea of \cite{suh, cysung11}. Since $-f$ is bounded below,
we can apply the proof of Theorem \ref{main1} to $-f$ with $q<1$. By
the inequality (\ref{eq6}), we get
$$(\frac{1-q}{2})G(p_{\epsilon})^{\frac{2q}{q-1}}L(-f)|_{p_{\epsilon}}>
G(p_{\epsilon})(-\delta)-\frac{|q+1|}{|q-1|}\delta^{2}.$$ Applying
$Lf \geq F(f)+ H(|\nabla f|),$ we have
$$\frac{F(f(p_{\epsilon}))+H(|\nabla f(p_{\epsilon})|)}{(-f(p_{\epsilon})+a)^{q}}\leq
\frac{L
f(p_{\epsilon})}{(-f(p_{\epsilon})+a)^{q}}<(\frac{2\delta}{1-q})
\frac{1}{(-f(p_{\epsilon})+a)^{\frac{q-1}{2}}}+\frac{2|q+1|}{(q-1)^{2}}\delta^{2}.$$
By a simple calculation,
\begin{equation}\label{eq8}
\frac{F(f(p_{\epsilon}))+H(|\nabla
f(p_{\epsilon})|)}{(-f(p_{\epsilon})+a)^{\frac{q+1}{2}}}<
\frac{2\delta}{1-q}+\frac{2|q+1|}{(q-1)^{2}}\delta^{2}(-f(p_{\epsilon})+a)^{\frac{q-1}{2}}.
\end{equation}
By the same method as above, we get $H(|\nabla
f(p_{\epsilon})|)\rightarrow 0.$ If $\inf f >- \infty,$ then $F(\inf
f) \leq 0 $ as $\delta \rightarrow 0.$

Now it only remains to show that if $\liminf_{x\rightarrow
-\infty}\frac{F(x)}{(-x)^{\nu}}>0$ for some $\nu \leq 1,$ then $f$ is
bounded. Let's assume that to the contrary $\inf f=- \infty.$ By taking $q$ such that
$\frac{q+1}{2}< \nu$ and letting $\delta \rightarrow 0$, the
$\mathbf{RHS}$ of (\ref{eq8}) converges to $0$ while the
$\mathbf{LHS}$ of (\ref{eq8}) diverges to $\infty$. This is a
contradiction completing the proof.

\section{Proof of Corollary \ref{cor3}}
Suppose that $f$ is bounded above and satisfies $L f \geq c>0$ for a
constant $c$. Applying Theorem \ref{main2} with $F=c$ and $H=0$, one conclude
that $f$ is bounded and $F(\inf f) \leq 0.$ This is
contradictory to $F\equiv c
>0.$

For a proof of Corollary \ref{cor3} $(2)$, applying Theorem
\ref{main2} with $F(f)=c|f|^{d},$ it follows that $f$ is bounded and
$c|\inf f|^{d} \leq 0$ implying $f \equiv 0.$

\section*{Acknowledgments}
The authors would like to thank Hanjin Lee for remarks about the tamed exhaustion function.
This work was supported by the National Research Foundation of Korea(NRF) grant
funded by the Korea government(MEST) (No.  2012-0000341, 2011-0002791).
\footnotesize
\bibliographystyle{amsplain}

\providecommand{\bysame}{\leavevmode\hbox
to3em{\hrulefill}\thinspace}

\end{document}